\newtheorem{example}{Example}[section]
\newtheorem{theorem}{Theorem}
\newtheorem{lemma}{Lemma}
\newtheorem{corollary}{Corollary}
\newcommand{\Std}{\operatorname{\mathbf{Std}}}
\newcommand{\Set}{\mathbf{Set}}
\newcommand{\Cvx}{\mathbf{Cvx}}
\newcommand{\Meas}{\mathbf{Meas}}
\newcommand{\B}{\mathcal{B}}
\newcommand{\D}{\mathcal{D}}
\newcommand{\E}[2]{\mathbb{E}_{#1}(#2)}
\newcommand{\G}{\mathcal{G}}
\newcommand{\Nat}{\mathbb{N}}   
\newcommand{\two}{\mathbf{2}}
\newcommand{\one}{\mathbf{1}}
\newcommand{\Rinf}{\mathbb{R}_{\infty}}
\newcommand{\RSp}{\mathcal{R}}                     
\newcommand{\U}{\mathcal{U}}
 \newcommand{\be}{\begin{equation}} 
\newcommand{\ee}{\end{equation}}
\def\@normalsize{\@setsize\normalsize{14.5pt}\xiipt\@xiipt
\abovedisplayskip 12\p@ plus3\p@ minus7\p@
\belowdisplayskip \abovedisplayskip
\abovedisplayshortskip  \z@ plus3\p@
\belowdisplayshortskip  6.5\p@ plus3.5\p@ minus3\p@
\let\@listi\@listI}
\title[The Giry algebras on standard Borel spaces]{Deriving the Giry algebras  on standard Borel spaces using $\Rinf$-generalized points}
\date{}
\author{Kirk Sturtz}
\email{kirksturtz@yandex.com}
\begin{document}
\maketitle

\begin{abstract}
The Giry monad on the category of measurable spaces restricts to the full subcategory of standard Borel spaces, $\Std$, which contains the space $\Rinf$ which is the one-point compactification of the real line.  By viewing probability measures $P \in \G(A)$ as functionals operating on measurable functions $A \rightarrow \Rinf$, and taking the restriction of  those functionals to operate on affine measurable functions we show that $A \cong Hom_{\Rinf^{\Rinf}}(\Rinf^A|,\Rinf)$   for all  object $A$ lying in a subcategory of $\Std$ we denote by $\Std_{Cvx}$.  The objects of $\Std_{Cvx}$ are standard spaces with a convex space structure, hence $A$ is coseparated  by  affine measurable functions to $\Rinf$, and satisfies the generic ``fullness property''. The morphisms of the category $\Std_{Cvx}$ are  affine measurable functions.   

The isomorphism  is equivalent to the statement that  the full subcategory of $\Std_{Cvx}$ consisting of the single object $\Rinf$ is codense in $\Std_{Cvx}$ which allows us to easily construct the $\G$-algebras of objects in $\Std_{Cvx}$.  This permits an adjoint factorization of the Giry monad as the  composite of $\Std \xrightarrow{\hat{\G}} \Std_{Cvx}$, which is the Giry monad functor viewed as a functor into $\Std_{Cvx}$, and the partial forgetful functor $\Std_{Cvx} \xrightarrow{\U_{Cvx}} \Std$ which forgets the convex space structure.   We prove that the category  $\Std_{Cvx}$ is the category of algebras of the  $\G$ monad.

\end{abstract}

\tableofcontents

\section{Introduction}

Determining the category of algebras $\mathbf{Alg}_{\G}$ for the Giry monad\cite{Giry} $\G$ defined on the category of measurable spaces, $\Meas$,  which consist of those measurable spaces $X$ for which a $\G$-algebra $\G{X} \xrightarrow{h} X$ exists, is complicated by cardinality issues such as the hypothesis of measurable cardinals.   However, by restricting the $\G$  monad  to the full subcategory of standard (Borel) measurable spaces, denoted $\Std$,  we show the $\G$-algebras can be completely determined in a constructive manner.

The monad $\G$ restricted to $\Std$  is also referred to as $\G$, and there can be no confusion since we work only with the category $\Std$ and subcategories of $\Std$.  (Several lemma we employ are valid not only for standard spaces but for all measurable spaces; but we do not state lemmas in their most general context, e.g., Lemma \ref{necessary}  and Lemma \ref{countAdditive}.)
  
 The proof that the monad $\G$ on $\Meas$ really does restrict to $\Std$ is given in \S{2}.    The only knowledge of standard spaces required for this article is  knowing the definition of a standard space in terms of a countable field $\mathbb{F}$ with a basis, and the well-known  
  
\begin{lemma} \label{PF} Let $(X, \sigma(\mathbb{F}))$ be a standard space, and let $P, Q \in \G{X}$.  If $P(U) = Q(U)$ for all $U \in \mathbb{F}$ then $P(V) = Q(V)$ for all $V \in \sigma(\mathbb{F})$.
\end{lemma}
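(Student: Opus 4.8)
The plan is to use Dynkin's $\pi$--$\lambda$ theorem, reducing the claim to checking that the collection of sets on which $P$ and $Q$ agree forms a $\lambda$-system containing the generating field. First I would introduce
\[
\mathcal{D} = \{ V \in \sigma(\mathbb{F}) \mid P(V) = Q(V) \}.
\]
By the hypothesis of the lemma we have $\mathbb{F} \subseteq \mathcal{D}$, and since $\mathbb{F}$ is a field it is in particular closed under finite intersections, hence is a $\pi$-system. The whole argument then rests on promoting agreement on the $\pi$-system $\mathbb{F}$ to agreement on the generated $\sigma$-algebra.

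Next I would verify that $\mathcal{D}$ is a $\lambda$-system (Dynkin system) by checking its three defining axioms. Axiom (i): $X \in \mathcal{D}$, because $X \in \mathbb{F}$ and both $P, Q \in \G{X}$ are probability measures, so $P(X) = Q(X) = 1$. Axiom (ii): $\mathcal{D}$ is closed under complementation, since if $P(V) = Q(V)$ then $P(V^c) = 1 - P(V) = 1 - Q(V) = Q(V^c)$, using that $\sigma(\mathbb{F})$ is closed under complements. Axiom (iii): $\mathcal{D}$ is closed under countable unions of pairwise disjoint sets (equivalently, countable increasing unions), which follows from the countable additivity of the probability measures $P$ and $Q$, i.e. their continuity from below. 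With these three properties established, $\mathcal{D}$ is a $\lambda$-system containing the $\pi$-system $\mathbb{F}$.

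Then, applying Dynkin's theorem, the $\lambda$-system generated by a $\pi$-system coincides with the $\sigma$-algebra it generates, so $\sigma(\mathbb{F}) = \lambda(\mathbb{F}) \subseteq \mathcal{D}$. Since by construction $\mathcal{D} \subseteq \sigma(\mathbb{F})$, we obtain $\mathcal{D} = \sigma(\mathbb{F})$, which is exactly the assertion that $P(V) = Q(V)$ for all $V \in \sigma(\mathbb{F})$.

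I do not anticipate a genuine obstacle here, as this is the standard uniqueness-of-measure argument; indeed it could alternatively be extracted as the uniqueness clause of the Caratheodory Extension Theorem already invoked in the surrounding discussion. The single point demanding care is axiom (iii), where one must invoke the \emph{countable} additivity of $P$ and $Q$ rather than mere finite additivity. It is worth noting that the basis condition and the standardness hypotheses on $\mathbb{F}$ are not needed for this lemma: the conclusion holds for any field $\mathbb{F}$ generating $\sigma(\mathbb{F})$, so the statement is really a property of probability measures on a measurable space whose $\sigma$-algebra admits a field of generators.
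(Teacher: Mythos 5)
Your proof is correct, and it is essentially the same argument as the paper's: the paper proves this lemma purely by citing Cohn's \emph{Measure Theory} (Corollary 1.6.2), a uniqueness-of-measure result whose standard proof is exactly the Dynkin $\pi$--$\lambda$ argument you give. Your closing observation is also accurate --- neither the basis condition nor standardness is used, only that $\mathbb{F}$ is a field (indeed, a $\pi$-system containing $X$) generating $\sigma(\mathbb{F})$ and that $P,Q$ are countably additive probability measures.
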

\begin{proof} See Cohen\cite[Corollary 1.6.2]{Cohen}
\end{proof} 
 
The remainder of this section provides a necessary condition on the $\G$-algebras and an overview of the methodology yielding the factorization of the monad $\G$,  the organization of this article, and a note on our notation. 

Let $\Cvx$ denote the category of convex spaces and affine maps.
In discussing an affine sum of elements of a convex space $A$
we use the generic notation  $\sum_{i=1}^n p_i a_i$, where each element $a_i \in A$, $n$ is a natural  number, and  the two  conditions (i) $\sum_{i=1}^n p_i=1$, and (ii) where each $p_i \in [0,1]$  are always implied.  In discussing a superconvex space condition (i) is replaced by $\sum_{i \in \Nat} p_i = 1$ where $\Nat$ is the set of all natural numbers. The notation means that the limit of the sequence of partial sums is  equal to one.

 Given any standard space $X$ the space  $\G{X}$ has a convex structure  defined pointwise on $\G{X}$ by $ev_U(\sum_{i=1}^n p_i P_i) = \sum_{i=1}^n p_i P_i(U)$ for all measurable sets $U$ in $X$.  More generally, $\G{X}$ has a superconvex space structure defined pointwise by  on $\G{X}$ by $ev_U(\sum_{i \in \Nat} p_i P_i) = \sum_{i\in \Nat} p_i P_i(U)$ for all measurable sets $U$ in $X$.

\vspace{.1in}

Recall that the category $\mathbf{Alg}_{\G}$  is determined by the Eilenberg-Moore category of the $\G$-monad, denoted $\Std^{\G}$.  An object $X \in_{ob} \Std$ is an object in $\mathbf{Alg}_{\G}$ if and only if there is a $\G$-algebra $(X,h) \in_{ob} \Std^{\G}$ , and a morphism $X \xrightarrow{m} Y \in_{ar} \Std$  satisfies the condition $m \in_{ar} \mathbf{Alg}_{\G}$ if and only if $(X,h), (Y,k) \in_{ob} \Std^{\G}$ and  $(X,h) \xrightarrow{m} (Y,k) \in_{ar} \Std^{\G}$.
Our emphasis on $\mathbf{Alg}_{\G}$ rather than $\Std^{\G}$ is because in the applications to  probability it  is the category $\mathbf{Alg}_{\G}$ which we employ for modeling.  Indeed, a $\G$-algebra $\G{X} \xrightarrow{h} X$ simply represents a quotient space of the free-algebra $\G{X}$ induced by the map $h$, e.g., if $\two$ is the standard space with two points then the quotient of the free space $\G{\two}$ by the smallest equivalence relation generated by the condition $\frac{1}{3} \delta_0 + \frac{2}{3} \delta_1 \cong  \frac{2}{3} \delta_0 + \frac{1}{3} \delta_1$ is the standard discrete space consisting of three points.   However as an object in $\mathbf{Alg}_{\G}$ that quotient space is  nontrivial and useful for modeling.

\begin{lemma} \label{necessary}Let $X \in_{ob} \Std$.  If   $\G{X} \xrightarrow{h} X$ is a $\G$-algebra then $X$ has a convex space structure which makes the measurable function $h$  affine.     Moreover, if $\langle X,h \rangle \xrightarrow{f} \langle Y,k \rangle$ is a morphism of $\G$-algebras then $X \xrightarrow{f} Y$ is an  affine function.

\end{lemma}
\begin{proof} 
Given a $\G$-algebra  $h$ the object $X$ inherits a convex space structure from $\G{X}$ and $h$ by defining 
\be \nonumber       
\displaystyle{\sum_{i=1}^n} p_i x_i \stackrel{def}{=} h(\displaystyle{ \sum_{i=1}^n} p_i \delta_{x_i}).
\ee
 The measurable function $h$ is  affine because
 \be \nonumber
 \begin{array}{ccc}
 (h \circ \mu_X)(\sum_{i=1}^n  p_i \delta_{P_i}) & = & (h \circ \G{h})(\sum_{i=1}^n  p_i \delta_{P_i}) \\
 h( \sum_{i=1}^n p_i P_i) &=& h( \sum_{i=1}^n  p_i \delta_{h(P_i)}) \\
 &=& \sum_{i=1}^n p_i h(P_i)
 \end{array}
 \ee
 where the last line follows from the definition of the convex space structure on $X$.

Given the map of $\G$-algebras $\langle X,h \rangle \xrightarrow{f} \langle Y,k \rangle$ we have
\be \nonumber
\begin{array}{lcl}
f(\sum_{i=1}^n  p_i x_i ) &=& f \big( h \sum_{i=1}^n  p_i \delta_{x_i})\big) \\
&=& k\big( G(f)(\sum_{i=1}^n  p_i \delta_{x_i}) \big) \\
&=& k \big( \sum_{i=1}^n  p_i \delta_{f(x_i)} \big) \\
&=& \sum_{i=1}^n p_i k(\delta_{f(x_i)}) \\
&=& \sum_{i=1}^n p_i f(x_i)
\end{array}
\ee
showing $f$ is  affine.

\end{proof}

Lemma \ref{necessary} suggest the category of algebras of the $\G$ monad, $\mathbf{Alg}_{\G}$, is a subcategory of the  category $\Std \cap \Cvx$ whose objects are standard measurable spaces with a convex space structure, and whose morphisms are affine measurable functions.

This category $\Std \cap \Cvx$ has one object of particular importance.  The one point Alexandrov compactification of the real-line  $\mathbb{R}$,  which we denote $\Rinf$, is a Polish space because it is  a second countable compact Hausdorff space. Therefore $(\Rinf, \B_{\Rinf})$ is a standard space.    Because $\Rinf$ is a coseparator in $\Cvx$\cite{BK} it follows that $\Rinf$ is a  coseparator   in $\Std \cap \Cvx$.

 Note that the  discrete spaces in $\Std \cap \Cvx$, which are coseparated by the discrete object $\two=\{0,1\}$, with the convex space structure completely specified by $p 0 + (1-p) 1 = 0$ for all $p \in (0,1]$, are coseparated by $\Rinf$ because 
 $\two$ can  be  embedded into $\Rinf$ via the  affine measurable function $\two \xrightarrow{\kappa} \Rinf$ defined by $\kappa(0)=\infty$ and $\kappa(1)=0$.\footnote{Any countable convex space can be viewed as a superconvex space which makes the function $\kappa$ countably affine. Countably affine maps (whose domain is a superconvex space) arise naturally when we consider affine maps $\G{X} \xrightarrow{m} \Rinf$. }  

  Our methodology and terminology is a direct application of that  given in the text \emph{Sets for Mathematics} by Lawvere and Rosebrugh\cite[ Ch. 8, \S{4}]{LR}, applied to probability theory.  That modeling is the subject of \S{\ref{modeling}}.
  
  Section \ref{sec:full} defines a ``fullness property'' which is necessary for an   $\Rinf$-generalized point  $\tilde{P}$ of \mbox{$A\in_{ob} \Std \cap \Cvx$} to correspond to a point of $A$  meaning $\tilde{P}=ev_a$ for some unique point $a \in A$.  The full subcategory of $\Std \cap \Cvx$ consisting of those objects which satisfy the fullness property defines the category $\Std_{Cvx}$.  In \S{4} we show the  property that for every standard space $X$ the space $\G{X} \in_{ob} \Std_{Cvx}$. This result leads to a pair of adjoint  functors, $\Std \xrightarrow{\hat{\G}} \Std_{Cvx}$ and a partial forgetful functor $\Std_{Cvx} \xrightarrow{\U_{Cvx}} \Std$ which forgets the convex space structure,  whose composite is the Giry monad functor.
  
In \S{5} we apply the results of \S 4 which  imply, for $\RSp$ the full subcategory of $\Std_{Cvx}$ consisting of the single object $\Rinf$, that  there is a  full and faithful functor $\Std_{Cvx}^{op} \rightarrow \mathbf{Func}(\RSp, \Set)$ sending $A \mapsto \Std_{Cvx}(A,\bullet)$.  From this it follows that every  $A \in_{ob} \Std_{Cvx}$  has  a $\G$-algebras $\G{A} \xrightarrow{\epsilon_A} A$, and hence $A$ lies in the category $\mathbf{Alg}_{\G}$. The  property that $\Std_{Cvx} = \mathbf{Alg}_{\G}$ is shown in \S{6}.
 
 \section{Restricting the Giry monad to standard spaces}  \label{sec:Std}
 A measurable space $(X, \Sigma_X)$ is standard (Borel) if there exists a complete separable metric space $(Z,d_Z)$ such that $(X, \Sigma_X) \cong (Z, \B_Z)$, where $\B_Z$ is the Borel $\sigma$-algebra on $Z$ generated by the open sets of $(Z,d_Z)$. 
   An alternative definition of a standard space, which is more useful for probability theory,   follows.

Given a set $X$ the  finite field (Boolean algebra) of $X$ generated by $n$ subsets of $X$, say $S_n$ for $n=0,\ldots,n-1$, is denoted $\mathbb{F}_n =\langle S_0,S_1,\ldots,S_{n-1}\rangle$. 
The set of  \textit{atoms} of $\mathbb{F}_n$ are  the set of all nonempty intersection sets of
$\bigcap_{i=1}^n S_i^{\star}   \textrm{ where }S_i^{\star} \textrm{ is either }S_i\textrm{ or }S_i^c.$

A standard (measurable) space $(X, \Sigma_X)$ consists of a set $X$ with a $\sigma$-algebra on $X$ generated by a field $\mathbb{F}= \cup_{n \in \Nat} \mathbb{F}_n$, so $\Sigma_X =\sigma(\mathbb{F})$, such that 
\begin{enumerate}
\item  $\mathbb{F}_{n} \subseteq \mathbb{F}_{n+1}$ for all $n \in \Nat$, and

\item If $\{A_n\}_{n \in \Nat}$ is a sequence of atoms  such that $A_{n+1} \subseteq A_n$ for all $n$, then $\bigcap_{n \in \Nat} A_n \ne \emptyset$. 
\end{enumerate}

\vspace{.1in}

When such a sequence of finite fields  $\{\mathbb{F}_n\}_{n \in \Nat}$ exists  we say that the sequence of finite fields is a basis for the field $\mathbb{F}$, and the definition for a standard space can be compactly stated  as
 
\begin{quote}
A measurable space $(X,\Sigma_X)$ is a standard measurable space  if and only if $\Sigma_X = \sigma(\mathbb{F})$ for some field $\mathbb{F}$ which possesses a basis.
\end{quote}

The equivalence between the two definitions of a standard space  follows readily from the properties of countably generated standard spaces.  See, for example Preston\cite[Section 3]{Preston}, Cohen\cite[Chapter 8 \S 6]{Cohen} and also Gray\cite{arp} who uses the alternative definition almost  exclusively, and provides further background.  

The definition of a standard space using a countable field $\mathbb{F}$ is preferred by probability theorist because the well-known Caratheodory Extension Theorem,  Lemma \ref{PF}, as well as  the following result, all of which  are stated explicitly in terms of field elements $U \in \mathbb{F}$.

  \begin{lemma} \label{BigThm} Let $(X, \sigma(\mathbb{F}))$ be a standard space.  If we endow the underlying set of $\G{X}$ with the initial (=smallest) $\sigma$-algebra such that, for every $U \in \mathbb{F}$, the evaluation map \mbox{$\U( \G{X}) \xrightarrow{ev_U} [0,1]$} is measurable, then 
  \begin{enumerate}
  \item $(\U(\G{X}), \operatorname{\textrm{initial }\sigma-\textrm{algebra}})$ is a  standard space, and
   \item for $V \in \sigma(\mathbb{F})$ the evaluation map $(\U( \G{X}),\operatorname{\textrm{initial } \sigma-\textrm{algebra}}) \xrightarrow{ev_V} [0,1]$
   is measurable.
   \end{enumerate}
    \end{lemma}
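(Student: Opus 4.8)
The plan is to establish the three claims in order, using the characterization of a standard space by a basis of finite fields together with the injectivity of the evaluation maps supplied by Lemma~\ref{PF}. The three parts are largely independent: (1) realizes $\U(\G{X})$ as a closed subspace of a countable cube, (2) is a monotone-class extension from $\mathbb{F}$ to $\sigma(\mathbb{F})$, and (3) is the obvious setwise convex structure.

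For (1), since $\mathbb{F}=\cup_{n\in\Nat}\mathbb{F}_n$ is countable the initial $\sigma$-algebra on $\U(\G{X})$ is generated by the countable family $\{ev_U\}_{U\in\mathbb{F}}$ of maps into the standard space $[0,1]$. Bundling these into one map
\[
  \Phi\colon \U(\G{X})\longrightarrow [0,1]^{\mathbb{F}},\qquad \Phi(P)=\bigl(P(U)\bigr)_{U\in\mathbb{F}},
\]
Lemma~\ref{PF} shows $\Phi$ is injective, and by the defining property of the initial $\sigma$-algebra $\Phi$ is a measurable embedding onto its image equipped with the subspace $\sigma$-algebra. As $\mathbb{F}$ is countable, $[0,1]^{\mathbb{F}}$ is a countable product of copies of the standard space $[0,1]$, hence standard, and a measurable subspace of a standard space is again standard (Cohen~\cite{Cohen}); so it suffices to identify the image. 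That image is contained in the set $F$ of normalized finitely additive functions, cut out by the countably many closed conditions $x_X=1$ and $x_{U\cup V}=x_U+x_V$ for disjoint $U,V\in\mathbb{F}$; a countable intersection of closed sets being closed, $F$ is closed. For the reverse inclusion one shows that every such $x$ extends to a genuine probability measure: condition~(2) makes the atoms of the $\mathbb{F}_n$ a compact class, so the standard compact-paving criterion upgrades finite additivity to countable additivity on $\mathbb{F}$, whence the Caratheodory Extension Theorem produces the measure. Thus $F=\Phi(\U(\G{X}))$ and (1) follows.

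For (2) I would run a Dynkin argument on the collection $\mathcal{L}=\{V\in\sigma(\mathbb{F}) : ev_V \text{ is measurable for the initial }\sigma\text{-algebra}\}$. By construction $\mathcal{L}$ contains the field $\mathbb{F}$, which is a $\pi$-system, and $\mathcal{L}$ is a $\lambda$-system: $ev_X\equiv1$ is measurable; $ev_{V\setminus W}=ev_V-ev_W$ handles proper differences; and for $V_m\uparrow V$ continuity from below of each measure gives $ev_V=\lim_m ev_{V_m}$ pointwise on $\G{X}$, a limit of measurable functions. Dynkin's lemma then yields $\sigma(\mathbb{F})\subseteq\mathcal{L}$, which is (2). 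For (3) one uses the setwise structure: for $\mathbf{p}\in\Omega$ and $\{P_i\}_{i\in\Nat}$ define $\bigl(\sum_{i\in\Nat}p_iP_i\bigr)(V)=\sum_{i\in\Nat}p_iP_i(V)$. Tonelli's theorem applied to the nonnegative double series shows the result is countably additive, hence a member of $\G{X}$, and it reduces the unit and nesting axioms of a superconvex space to rearrangement identities for nonnegative series, valid termwise for each fixed $V$.

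I expect part (1) to be the main obstacle, and within it the claim that finite additivity on $\mathbb{F}$ automatically upgrades to countable additivity. Everything else is either a categorical formality---the initial $\sigma$-algebra makes $\Phi$ an embedding---or routine measure theory---the Dynkin argument for (2) and Tonelli for (3). Only the extension step genuinely consumes the defining compactness property, condition~(2), of a standard space, and pinning the image of $\Phi$ down as exactly the closed set $F$ is where the real content lies.
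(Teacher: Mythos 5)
Your proof is correct, but it takes a genuinely different route from the paper, most visibly in part (1). The paper stays internal to its basis definition of a standard space: it builds an explicit basis of finite fields on $\U(\G{X})$ by taking the fields $\mathbf{G}_{i,n}$ generated by the dyadic preimages $ev_{U_i}^{-1}((\tfrac{k}{2^n},1])$ and merging them along diagonals into an increasing sequence $\mathbb{G}_n$, then verifies the nonempty-atom-intersection condition directly (rather tersely), with separation coming from Lemma \ref{PF}. You instead embed $\Phi\colon \U(\G{X}) \hookrightarrow [0,1]^{\mathbb{F}}$, identify the image as the closed set $F$ of normalized finitely additive functions, and upgrade finite to countable additivity via the compact-class criterion plus Caratheodory; this leans on the Polish-side characterization (countable products and Borel subspaces of standard spaces are standard) rather than exhibiting a basis, and the compact-paving step is exactly Gray's countable-extension-property theorem for fields with a basis --- the paper's own reference \cite{arp} --- which uses condition (2) of the basis definition via a K\"onig's-lemma argument on the tree of atoms (note this requires reading the paper's condition $\mathbb{F}_{n+1}\subseteq\mathbb{F}_n$ as the evidently intended increasing chain $\mathbb{F}_n\subseteq\mathbb{F}_{n+1}$). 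What each buys: the paper's construction produces a concrete basis for $\G{X}$, keeping the whole development in the probabilist's field-with-basis framework; your argument is shorter, cleaner to verify, and pins down $\G{X}$ as an explicit closed subset of a cube. In part (2) you are actually more complete than the paper: the paper proves measurability of $ev_V$ only for countable intersections $V=\cap_{i}U_i$ of field elements by monotone limits and asserts that this suffices, whereas your Dynkin $\pi$--$\lambda$ argument on $\mathcal{L}=\{V : ev_V \textrm{ measurable}\}$ is the standard rigorous way to reach all of $\sigma(\mathbb{F})$ (your verification that $\mathcal{L}$ is a $\lambda$-system, using $ev_{V\setminus W}=ev_V-ev_W$ and continuity from below, is exactly right). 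Part (3) coincides with the paper's pointwise construction, with your Tonelli justification filling in what the paper leaves as ``easily verified.''
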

    \begin{proof} The standard Borel $\sigma$-algebra on the unit interval is generated by the field $\mathbb{F}$ with the basis consisting of the finite fields  $\mathbb{F}_n = \langle E_{n,0}, E_{n,1}, \ldots, E_{n, 2^n-1} \rangle$ where  the sets $E_{n, k} = (\frac{k}{2^n},1]$, for $k =0,1,2,\ldots, 2^n -1$.
    
    To prove (1) we  use  the fact  that the field $\mathbb{F}$ is a countable set so we can label the elements of $\mathbb{F}$ by an integer index $i \in \Nat$.  For each $U_i \in \mathbb{F}$,  for each  $n \in \Nat$, and for each $k = 0,1,\ldots 2^n -1$, define the set 
  \be \nonumber
  E_{i,n,k} =   ev_{U_i}^{-1}((\frac{k}{2^n}, 1]). 
  \ee
  Let $\mathbf{G}_{i,n} = \langle E_{i,n,0}, E_{i,n,1}, \ldots, E_{i, n, 2^n-1} \rangle$ denote the finite field generated by the sets $E_{i,n,k}$.
  
  To show these elements make $\U(\G{X})$ into a standard space we use a diagonalization argument using the table  
  \begin{center}
    \begin{tabular}{cccc}
   $\mathbf{G}_{0,1}$ & $\mathbf{G}_{0,2}$ & $\mathbf{G}_{0,3}$  & \ldots  \\
   $\mathbf{G}_{1,1}$ & $\mathbf{G}_{1,2}$ & $\mathbf{G}_{1,3}$  & \ldots \\
   $\mathbf{G}_{2,1}$ & $\mathbf{G}_{2,2}$ & $\mathbf{G}_{2,3}$  & \ldots \\
   &&& $\ddots$
    \end{tabular}
  \end{center}
  to define the finite fields  $\mathbb{G}_1 = \mathbf{G}_{0,1}$, $\mathbb{G}_2 = \mathbf{G}_{0,2} \bigvee \mathbf{G}_{1,1}$ where the notation $\mathbf{G}_{0,2} \bigvee \mathbf{G}_{1,1}$ denotes the finite field which is the coproduct of the two finite fields.
  In general, $\mathbb{G}_n = \mathbf{G}_{0,n} \bigvee \mathbf{G}_{1, n-1} \bigvee \ldots \bigvee \mathbf{G}_{n-1,1}$ which is the coproduct of finite fields lying on a diagonal in the above table.
  
 The finite fields form a basis for the field $\mathbb{G} = \bigcup_{n=1}^{\infty} \mathbb{G}_n$ 
since each $\mathbb{G}_{n-1} \subset \mathbb{G}_n$ and, by the construction, in each finite field $\mathbb{G}_n$ the elements generating the finite field are sets $ev_{U_i}^{-1}( (\frac{k}{2^n},1])$ so any sequence of atoms of these finite fields which form a decreasing sequence of sets will consist of those probability measures on $X$ with support on some set $V \subset U_i$ which is nonempty because the nonempty intersection property holds in $\mathbb{F}$, and such a set  $ev_V^{-1}(\{1\}) = \cap_{i,k,n} ev_{U_i}^{-1}( (\frac{k}{2^n},1])$ includes the Dirac measures $\delta_x$ for all $x\in V$ (which may be a singleton set $\{x\}$). 
  Thus $\mathbb{G}$ is a field such that  $( \U(\G{X}), \sigma(\mathbb{G}))$=
  $(\U(\G{X}), \operatorname{\textrm{initial }\sigma-\textrm{algebra}})$ is a standard measurable space.
  It is a separated  measurable space because if $P, Q \in \G{X}$ with $P \ne Q$ then, by Lemma \ref{PF}, there exists a $U \in \mathbb{F}$ such that $P(U) \ne Q(U)$.
  
  To prove part (2) it suffices to show that if $\{U_i\}_{i \in \Nat}$ is a countable family of elements of $\mathbb{F}$ then, for 
 $V = \cap_{i \in \Nat} U_i$, it follows that the function $ev_V$ is a measurable function. 
For every $n \in \Nat$ we have $\cap_{i=0}^n U_i \in \mathbb{F}$, and hence each 
function $ev_{ \cap_{i=0}^n U_i}$ is  measurable for all finite $n$.  The sequence of measurable functions $\{ev_{ \cap_{0=1}^n U_i}\}_{n=0}^{\infty}$ is monotone decreasing and bounded below by zero since, for every $P \in \G{X}$, we have 
\be \nonumber
0 \le ev_{\displaystyle{\cap_{i=0}^{n+1}} U_i}(P) = P(\cap_{i=0}^{n+1} U_i) \le P(\cap_{i=0}^{n} U_i) = ev_{\cap_{i=0}^n U_i}(P).
\ee 
Since $ev_{V} =  \lim_{n \rightarrow \infty} \{ev_{ \cap_{i=0}^n U_i}\}$ it follows that $ev_V$ is a measurable function.

  \end{proof}

The restriction of the Giry monad to  the subcategory $\Std$ of $\Meas$ has the unit and multiplication natural transformations  defined identically as to the monad $\G$, and the measurable sets of each set $\G(X)$ can be defined identically as well according to Lemma \ref{BigThm}. Hence we refer to the  monad $\G$ restricted to $\Std$ as $\G$  also.

\section{Probability measures as functionals} \label{modeling}
Given any standard space $X$  let $\Rinf^X = \Std(X, \Rinf)$ which is the hom-set of all measurable functions on $X$ into $\Rinf$. 
Every $P \in \G(X)$  determines a functional  
\be \label{operator}
\Rinf^{X} \xrightarrow{\tilde{P}} \Rinf  \quad  : f \mapsto \E{P}{f}
\ee
where $\E{P}{f}=\int_X f \, dP$.  By the properties of Lebesque integral the  function $\tilde{P}$  satisfies the following three properties:
\begin{enumerate}[(i)]  
\item  $\Rinf$-linear: $\tilde{P}(\lambda \circ f) = \lambda \cdot \tilde{P}(f)$ for all $\lambda \in \Rinf$\footnote{Per the usual convention in measure theory we define $\infty \cdot 0=0$.}, 
\item weakly averaging:  $\tilde{P}(\overline{c}) = c$ for all constant functions $X \xrightarrow{\overline{c}} \Rinf$ on $X$, and
\item  additive: $\tilde{P}(f + g)=\tilde{P}(f) + \tilde{P}(g)$ for all $f,g \in \Rinf^{X}$.
\end{enumerate}
Conversely, given a functional $\Rinf^X \xrightarrow{\tilde{P}} \Rinf$ satisfying the three properties uniquely specifes a probability measure on $X$, and  there is a bijective correspondence between such a functional and probability measures on $X$ given by $\tilde{P}(\chi_U)=P(U)$.  The notational distinction, $\tilde{P}$ versus $P$, indicates our viewpoint concerning a  ``probability measure''. Of particular interest is the Dirac measure, $P=\delta_x$ corresponding to the functional $\tilde{P} = ev_x$.  

For every $A \in_{ob} \Std_{Cvx}$  let $\Rinf^A| = \Std_{Cvx}(A, \Rinf)$ which is the hom-set of all  affine measurable functions from $A$ to $\Rinf$.\footnote{Arbitrary standard spaces are denoted by $X$ whereas those standard spaces with a convex space structure are denoted by $A$ or $B$. Similarly, an arbitrary measurable function is denoted by $f$ whereas a generic  affine measurable function is denoted by $m$.}   The inclusion map $\Rinf^A| \hookrightarrow \Rinf^A$ 
 yields the restriction of the  functional $\tilde{P}$, as defined in equation (\ref{operator}), to the \emph{affine} measurable functions $m \in \Rinf^{A}$.  This allows us to derive  
the three properties listed above  from the property that requires that for all  affine measurable maps $\Rinf \xrightarrow{\phi} \Rinf$ the property $\phi(\tilde{P}(m)) = \tilde{P}(\phi \circ m)$ holds for all $m \in \Rinf^A|$.  For example, the property of being additive follows from taking $\phi(x) = \frac{1}{2}x$ and noting
\be \nonumber
\frac{1}{2} \tilde{P}(m_1 + m_2) = \phi(\tilde{P}(m_1 + m_2) = \tilde{P}(\phi \circ (m_1 + m_2) ) = \tilde{P}(\frac{1}{2}m_1 + \frac{1}{2}m_2) = \frac{1}{2} \tilde{P}(m_1) + \frac{1}{2}\tilde{P}(m_2).
\ee
Canceling out the common factor $\frac{1}{2}$ thus gives finite additivity.  More generally, we have
\begin{lemma} \label{countAdditive} For $X \in_{ob} \Std$ and $P \in \G{X}$ 
the functional $\tilde{P}$ is countably additive. 
\end{lemma}
\begin{proof}
Let $\{f_i\}_{i=1}^{\infty}$ be a sequence of non-negative measurable functions on $X$.  Define $g_n = \sum_{i=1}^n f_i$.  By the monotone convergence theorem we have $\tilde{P}(\lim_{n \rightarrow \infty} g_n) = \lim_{n \rightarrow \infty} \tilde{P}(g_n)$ which is equivalent to the statement that $\tilde{P}(\sum_{i=1}^{\infty} f_i) = \sum_{i=1}^{\infty} \tilde{P}(f_i)$.  

For arbitrary measurable functions $f_i$ on $X$ we decompose $f$ into its  non-negative and negative components,  $f=f^{+} - f^{-}$, where $f^+ = max(f(x), 0)$ and $f^{-}= max(-f(x), 0)$  and proceed in the usual fashion as is done in measure theory.
\end{proof}

Every functional $\Rinf^A| \xrightarrow{\tilde{P}} \Rinf$ satisfying the property $\tilde{P}(\phi \circ m) = \phi( \tilde{P}(m))$ for  all $m \in  \Rinf^{A}|$ and for  all  $\phi \in \Rinf^{\Rinf}|$ is  called a $\Rinf$-\textbf{generalized point} of $A$. 
   
\section{Probability functionals operating on  affine maps are evaluation maps}  \label{sec:full}
We say a standard space $A$ in $\Std \cap \Cvx$  satisfies the generic \textbf{fullness property}
 if and only if for all $P \in \G{A}$ the property
\be \label{fullProperty}
\bigcap_{m \in {\Rinf^A}|} m^{-1}(\tilde{P}(m)) = \big\{a \in A \, | \, \forall m \in {\Rinf^A}| \, \,   \tilde{P}(m) = m(a)   \big\}\ne \emptyset
\ee 
is satisfied.

\begin{lemma} \label{full}
Let $A \in_{ob} \Std \cap \Cvx$.  Every  $\Rinf$-generalized point $\Rinf^{A}| \xrightarrow{\tilde{P}} \Rinf$ of $A$ which satisfies the fullness property  is an evaluation map.  In other words, $\tilde{P} = ev_a$ for some unique point $a \in A$.
\end{lemma}
\begin{proof} If $\tilde{P}$ satisfies the fullness property then there exists an $a \in A$ such that, for all affine measurable maps $m \in \Rinf^A|$,  $\tilde{P}(m)=m(a)=ev_a(m)$.  Since every $A \in_{ob} \Std \cap \Cvx$  is coseparated by affine maps to $\Rinf$ the point $a \in A$ must be unique.   
\end{proof}

 Define $\Std_{Cvx}$ to be the full subcategory of $\Std \cap \Cvx$ consisting of all  $A \in_{ob} \Std \cap \Cvx$ which satisfy the fullness property.  Thus $A \in_{ob} \Std_{Cvx}$ if and only if every $P \in \G{A}$ satisfies equation \ref{fullProperty}.

Let us first show that $\Std_{Cvx}$ contains an object which is not a  free object. Hence, once we show $\Std_{Cvx}$ is a subobject of $\mathbf{Alg}_{\G}$, it will follow that the Kleisi category of the $\G$ monad is a proper subcategory of $\Std_{Cvx}$.

\begin{example} \label{RFull}    $\Rinf \in_{ob} \Std_{Cvx}$.

  The proof that $\Rinf$ satisfies the fullness property is \cite[Exercise 8.23]{LR}  applied to our specific case.   For $\Rinf^{\Rinf}| \xrightarrow{\tilde{P}} \Rinf$ any $\Rinf$-generalized point of $\Rinf$, $\tilde{P}(id_{\Rinf}) \in \bigcap_{\phi \in \Rinf^{\Rinf}|} \phi^{-1}(\tilde{P}(\phi))$.   Since the identity map $id_{\Rinf}$ is an affine measurable map which coseparates points of $\Rinf$ it follows that $\Rinf \in_{ob} \Std_{Cvx}$.
\end{example}

We now proceed to show that for every $X \in_{ob} \Std$ the property $\G{X} \in_{ob} \Std_{Cvx}$ holds. 
 
\begin{lemma} \label{GXfull} Given a standard space $(X, \sigma(\mathbb{F}))$ every $Q \in \G^2{X}$ satisfies the property
\be \label{result}
\bigcap_{U \in \mathbb{F}} ev_U^{-1} \big(\tilde{Q}(ev_U)\big) =\mu_X(Q).
\ee
\end{lemma} 
\begin{proof}
By definition of $\tilde{Q}$ we have $\tilde{Q}(ev_U^{\,})=\int_{P \in \G{X}} P(U) \, dQ = \mu_X(Q)[U]$.  Thus 
\be \nonumber
ev_U^{-1}\big(\tilde{Q}(ev_U^{\,})\big) = \{R \in \G{X} \, | \, R(U)=\mu_X(Q)[U] \}. 
\ee
Hence taking the intersection over all measurables sets $U \in \mathbb{F}$ gives
\be \nonumber
\bigcap_{U \in \mathbb{F}} ev_U^{-1}\big(\tilde{Q}(ev_U^{\,})\big) = \{R \in \G{X} \, | \, R(U)=\mu_X(Q)[U] \textrm{ for all }U \in \mathbb{F} \} 
\ee
from which the result \ref{result} follows from  Lemma \ref{PF}.
\end{proof}

\begin{lemma} \label{affineSum} Let $X \in_{ob} \Std$.  Every  affine measurable function $\G{X} \xrightarrow{m} \Rinf$ is of the form $m =ev_f$ for some measurable function $X \xrightarrow{f} \Rinf$.  Consequently every affine measurable function \mbox{$\G{X} \xrightarrow{m} \Rinf$}  is countably affine.
\end{lemma}
\begin{proof} For $f$ a measurable function on $X$ the function $ev_f$ is clearly affine on $\G{X}$ due to the convex space structure of $\G{X}$ which  is defined pointwise by the family of equations 
\be \label{cvxSt}
ev_{U}^{\,} \big(\sum_{i\in \mathbf{n}} p_i P_i \big) = \sum_{i\in \mathbf{n}} p_i \, ev_U^{\,} (P_i)  \quad \textrm{ for all }U \in \sigma(\mathbb{F})
\ee
for all countable affine sums over elements of  $\G{X}$. 

But note that $\G{X}$ has in fact a superconvex space structure defined by the equations \ref{cvxSt} where $\mathbf{n}$ is the set of all natural numbers.  Consequently  any  measurable function \mbox{$X \xrightarrow{f} \Rinf$} determines the countably affine function $\G{X} \xrightarrow{ev_f} \Rinf$ since
\be \label{scvxSt}
ev_{f}^{\,} \big(\sum_{i\in \mathbf{n}} p_i P_i \big) = \sum_{i \in \mathbf{n}} p_i \tilde{P}(f). 
\ee

Conversely, for a set function $\G{X} \xrightarrow{m} \Rinf$ to preserve the (super)convex space structure of $\G{X}$ which is defined in terms of the elements $\{ev_{U}^{ }\}_{U \in \sigma(\mathbb{F})}$  it must be a (countable) linear transformation of those elements $\{ev_{U}^{\,}\}_{U \in \sigma(\mathbb{F})}$.\footnote{Note that instead of ``countable linear'' transformation we could say ``countable affine'' transformation of the elements $\{ev_U\}_{U \in \sigma(\mathbb{F})}$ but any constant component $c$ can be represented by $c \, ev_X^{\,}$ since $ev_X(P)=1$.}  Indeed, for any given measurable function $X \xrightarrow{f} \Rinf$ the function $\G(X) \xrightarrow{\int_X f \, d\bullet} \Rinf$ specified at $P \in \G(X)$ is, by definition of the Lebesque integral, given by
\be \nonumber
\begin{array}{lcl}
\int_X f \, dP &=& \lim_{n \rightarrow \infty} \big\{ \int_X \psi_n(x) \, dP  \big\}\\
&=& \lim_{n \rightarrow \infty} \left\{ \int_X \sum_{j=1}^{N_n} \lambda_{n,j} \chi_{U_{n,j}}^{\,} \, dP \right\} \\
&=& \lim_{n \rightarrow \infty} \big\{\sum_{j=1}^{N_n} \lambda_{n,j} P(U_{n,j}) \big\}
 \\
 &=& \lim_{n \rightarrow \infty} \left\{ \big( \sum_{j=1}^{N_n} \lambda_{n,j} ev_{U_{n,j}}^{\,} \big)P \right\} \\
 &=& \big( \sum_{n \in \Nat} \lambda_n ev_{U_n}^{\,} \big)P

\end{array}
\ee
where $\psi_n = \sum_{j=1}^{N_n} \lambda_{n,j} \chi_{U_{n,j}^{\,}}$ satisfies the property $\psi_n \le f$ and $\{\psi_n\}_{n=1}^{\infty}$ converges pointwise to $f$.

\end{proof}

 \begin{lemma}  \label{L14} If $(X, \sigma(\mathbb{F}))$ is a standard measurable space then, for all $Q \in \G(\G{X})$ and for all $U \in \mathbb{F}$, the property
\be \nonumber
\bigcap_{m \in \Rinf^{\G(X)}|} m^{-1} \big(\tilde{Q}(m)\big) =\mu_X(Q) = \bigcap_{U \in \mathbb{F}} ev_U^{-1} \big(\tilde{Q}(ev_U)\big).
\ee
holds.  
\end{lemma}
\begin{proof} By Lemma \ref{affineSum} every countably affine measurable function $\G{X} \xrightarrow{m} \Rinf$ is of the form $m=\sum_{i} \lambda_i ev_{U_i}^{\,}$.  Since every $\Rinf$-generalized point $\tilde{Q}$ of $\G{X}$ is  $\Rinf$-linear and countably additive it follows that $\tilde{Q}(\sum_{i} \lambda_i ev_{U_i}^{\,}) = \sum_{i} \lambda_i \tilde{Q}(ev_{U_i}^{\,})$.  Thus
\be \nonumber
{\big(\sum_{i} \lambda_i ev_{U_i}^{\,}\big)}^{-1}\big(\tilde{Q}(\sum_{i} \lambda_i ev_{U_i}^{\,})\big)=\{R \in \G(X) \, | \, \sum_i \lambda_i R(U_i) = \sum_i \lambda_i \mu_X(Q)[U_i] \}.
\ee
Taking the intersection over all such countably affine measurable functions, which includes the basic functions  $ev_{U_i}^{\,}$ we obtain
\be \nonumber
\bigcap_m  (\sum_{i} \lambda_i ev_{U_i}^{\,} )^{-1} \big(\sum_{i} \lambda_i \tilde{Q}(ev_{U_i}^{\,}) \big) = \{P \in \G(X) \, | \, \sum_{i} \lambda_i P(U_i)  = \sum_{i} \lambda_i \mu_X(Q)[U_i]   \quad \forall m \} 
\ee
The only $P \in \G{X}$ satisfying the equality on the right hand side term for all  affine maps $m$ is clearly $P=\mu_X(Q)$.
\end{proof}

\begin{theorem} \label{GX} Given any standard space $(X, \sigma(\mathbb{F}))$ the standard space $\G{X}$ is an object in $\Std_{Cvx}$.
\end{theorem}
\begin{proof} By Lemma \ref{L14} the space $\G{X}$ satisfies the  fullness property.
By Lemma \ref{PF} the evaluation maps $\{ev_{U}^{\,}\}_{U \in \mathbb{F}}$ coseparate the points of $\G{X}$. Hence by the definition of $\Std_{Cvx}$ it  follows that $\G{X} \in_{ob} \Std_{Cvx}$.

\end{proof}

\section{Constructing  $\G$-algebras in $\Std_{Cvx}$}       \label{sec:Expectation}     
Let $\RSp$ denote the full subcategory of $\Std_{Cvx}$ consisting of the single object $\Rinf$.   
Let $Func(\RSp, \Std_{Cvx})$ denote the category of all functors from the category $\RSp$ to the category $\Std_{Cvx}$.

\begin{theorem} \label{FF}
The functor 
\be \nonumber
\begin{array}{ccc} 
\Std_{Cvx}^{op} & \xrightarrow{\mathcal{Y}} & Func(\RSp, \Std_{Cvx}) \\
A \xleftarrow{m^{op}} B & \mapsto & \Std_{Cvx}(B,\bullet) \xrightarrow{\Std_{Cvx}(m, \bullet)} \Std_{Cvx}(A,\bullet)
\end{array}
\ee
is a full and faithful functor.
\end{theorem}
\begin{proof}
In the category $\Std_{Cvx}$ every affine measurable function $A \xrightarrow{m} B$  is determined by the points $\one \xrightarrow{a} A$ so it suffices to show the full and faithful property on points.

Since there is only one component of a natural transformation $\Std_{Cvx}(A,\bullet) \xrightarrow{J} \Std_{Cvx}(\one,\bullet)$  the faithful property reduces to proving the statement \begin{quote}  If $ev_{a_1}(m) = ev_{a_2}(m)$ for all $m \in \Std_{Cvx}(A, \Rinf)$ then $a_1=a_2$.\end{quote}
where $ev_{a_i} = \mathcal{Y}(\one \xrightarrow{a_i} A)$ for $i=1,2$.  This is true by the fact that $A \in_{ob} \Std \cap \Cvx$ so that there are enough affine measurable maps $A \xrightarrow{m} \Rinf$ to coseparate the points of $A$.   

Since $A \in_{ob} \Std_{Cvx}$ it satisfies the fullness property and, by Lemma \ref{full}, it follows that the functor $\mathcal{Y}$ is full.
\end{proof}

In other words, the full subcategory of $\Std_{Cvx}$ consisting of the single object $\Rinf$ is codense in $\Std_{Cvx}$.

\begin{lemma} \label{UniqueMaps} If $A \in_{ob} \Std_{Cvx}$ then 
there exists a unique  affine  map $\G{A} \xrightarrow{\epsilon_A} A$ in $\Std_{Cvx}$ such that for all $P \in \G{A}$ and all countably affine measurable  maps $A \xrightarrow{m}  \Rinf$  it follows that $\mathbb{E}_{P}(m)=\int_A m \, dP = m(\epsilon_A(P))$. In particular we have $\epsilon_A(\delta_a) = a$ for all $a \in A$.
\end{lemma}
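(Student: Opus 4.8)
The plan is to build $\epsilon_A$ as the unique factorization through $A$ of a cone of expectation maps into the coseparator $\Rinf$, so that the whole argument rests on the fact that the affine maps $A \to \Rinf$ coseparate the points of $A$.

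I would dispatch uniqueness (the easy half) first. If $\epsilon_A$ satisfies $m(\epsilon_A(P)) = \mathbb{E}_P(m)$ for every affine $m\colon A \to \Rinf$, then for each $P$ the value $\epsilon_A(P)$ is the unique point whose image under every such $m$ equals $\mathbb{E}_P(m)$; since $\Rinf$ coseparates the points of $A$ this determines $\epsilon_A(P)$ pointwise, so at most one such map exists. The same coseparation argument forces $\epsilon_A$ to be affine: by linearity of the integral and affineness of each $m$, $m\big(\epsilon_A(\sum_i p_i P_i)\big) = \sum_i p_i \mathbb{E}_{P_i}(m) = m\big(\sum_i p_i \epsilon_A(P_i)\big)$ for all $m$, hence $\epsilon_A(\sum_i p_i P_i) = \sum_i p_i \epsilon_A(P_i)$.

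For existence I would assemble the cone. For each affine $m\colon A \to \Rinf$ put
\[
\phi_m \;=\; \mathbb{E}_{\bullet}(\one_{\Rinf}) \circ \G{m}\colon \G{A} \longrightarrow \Rinf, \qquad \phi_m(P) \;=\; \int_{\Rinf} t\, d(\G{m}(P)) \;=\; \int_A m\, dP \;=\; \mathbb{E}_P(m),
\]
which is measurable because $\G{m}$ is the measurable pushforward and $\mathbb{E}_{\bullet}(\one_{\Rinf})$ is the (measurable) algebra structure on the superconvex space $\Rinf$ (Lemma \ref{BigThm}). Using that $\Rinf$ coseparates and that $\Std_{Cvx}$ has products and equalizers, present $A$ as a subobject (equalizer of a pair of affine maps between powers of $\Rinf$) via the embedding $\eta_A\colon a \mapsto (m(a))_{m}$ into $\Rinf^{\,\Std_{Cvx}(A,\Rinf)}$. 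The family $(\phi_m)_m$ is a cone with vertex $\G{A}$ compatible with this presentation: every affine $s\colon \Rinf \to \Rinf$ commutes with expectation, $s(\int_A m\, dP) = \int_A (s\circ m)\, dP$, so $s\circ \phi_m = \phi_{s\circ m}$, and hence the two affine maps defining $A$ as an equalizer agree after composing with $\Phi = (\phi_m)_m$. Once the cone factors through $A$ I obtain a unique $\epsilon_A\colon \G{A} \to A$ with $m(\epsilon_A(P)) = \mathbb{E}_P(m)$ for all $m$; taking $P=\delta_a$ gives $m(\epsilon_A(\delta_a)) = \int_A m\, d\delta_a = m(a)$ for all $m$, so $\epsilon_A(\delta_a)=a$ by coseparation. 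Measurability of $\epsilon_A$ as a $\Std_{Cvx}$-morphism follows because $A$ is standard and its Borel structure is generated by a countable coseparating subfamily of the $m$'s, each composite $m\circ \epsilon_A = \phi_m$ being measurable.

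The step I expect to be the genuine obstacle is verifying that the cone $\Phi$ really factors through $A$ rather than merely through the ambient power $\Rinf^{\,\Std_{Cvx}(A,\Rinf)}$ — equivalently, that every $P\in\G{A}$ admits a barycenter lying in $A$. This is where the compatibility condition \eqref{compatability} and the completeness of the Polish space $A$ are essential, since $\Phi$ factors on Dirac measures (where $\Phi(\delta_a)=\eta_A(a)$) and on finitely supported $P=\sum_i p_i\delta_{a_i}$ (where the barycenter is the honest combination $\sum_i p_i a_i \in A$, so $\Phi(P)=\sum_i p_i \eta_A(a_i)$ and the two equalizer maps agree on it by affineness), but a general $P$ is not a countable combination of Diracs. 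I would close this by iterating \eqref{compatability} along couplings, as in Diagram \ref{Wasserstein}, to show the finite-barycenter map is nonexpansive from $(\G{A}, d_W)$ into $(A, d_X)$; then, using that finitely supported measures are dense in $\G{A}$ (as in the proof of Lemma \ref{support}), the finite barycenters of an approximating sequence form a Cauchy sequence in $A$ whose limit exists by completeness and serves as $\epsilon_A(P)$, after which coseparation yields the required identity $m(\epsilon_A(P))=\mathbb{E}_P(m)$ for all $P$.
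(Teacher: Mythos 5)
You have correctly reproduced the paper's skeleton: uniqueness, affineness, and $\epsilon_A(\delta_a)=a$ extracted from the fact that the affine maps $A \to \Rinf$ coseparate the points, and existence obtained by factoring the cone of expectation maps $\mathbb{E}_{\bullet}(m)$ through a presentation of $A$ inside powers of $\Rinf$. The paper does this by forming the comma category $A \downarrow \mathscr{C}$, where $\mathscr{C}$ is the one-object category on $\Rinf$, checking the cone condition $\phi_{m,k}(\mathbb{E}_P(m)) = \mathbb{E}_P(\phi_{m,k}\circ m) = \mathbb{E}_P(k)$, and then simply asserting that $A = \lim \mathscr{D}$ because $\Rinf$ is a coseparator. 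Where you genuinely diverge is at that assertion. Coseparation only makes the canonical comparison map $A \to \lim \mathscr{D}$ (your $\eta_A$ into the power) injective; the claim that it is an isomorphism --- i.e.\ that every compatible family $(t_m)_m$ is realized by an actual point of $A$ --- is exactly the barycenter-existence problem you isolate, and the paper gives no argument for it beyond the word ``coseparator.'' Your patch (finitely supported measures are dense, the finite barycenter map is nonexpansive by iterating \eqref{compatability} as in Diagram \ref{Wasserstein}, completeness of the Polish space supplies the limit point) is the standard barycenter construction on complete metric spaces satisfying \eqref{compatability}, and it is more honest than the paper at precisely its weakest joint.

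Two caveats on your patch, however. First, it only covers objects where \eqref{compatability} holds, i.e.\ the geometric components; $\Std_{Cvx}$ also contains the discrete-type objects ($\mathbf{n}$ and $\bN$ with $\frac{1}{2}i + \frac{1}{2}j = \min(i,j)$) and mixed types, and there the nonexpansiveness fails outright: with $P = (1-p)\delta_0 + p\,\delta_1$ and $Q = \delta_1$ one has $d_W(P,Q) = 1-p$, which can be made arbitrarily small, while the barycenters $\min(0,1)=0$ and $1$ remain at distance $1$, so the barycenter map is not even $d_W$-continuous. Fortunately the discrete case needs no approximation: every probability measure on a countable discrete space literally \emph{is} a countable convex combination $\sum_{i}p_i\delta_{x_i}$, so the superconvex structure yields the barycenter $\min\{x_i \,|\, p_i>0\}$ directly; you should split the argument by type. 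Second, your closing step --- passing from $m(\epsilon_A(P_n)) = \mathbb{E}_{P_n}(m)$ along the approximating sequence to $m(\epsilon_A(P)) = \mathbb{E}_P(m)$ --- silently requires $m$ to be $d_X$-continuous and $\mathbb{E}_{P_n}(m) \to \mathbb{E}_P(m)$ under $d_W$-convergence; but the affine maps $A \to \Rinf$ in this setting are only measurable, and may be unbounded or discontinuous (the $\{0,\infty\}$-valued affine characteristic functions of ideals are the paper's own example), so this interchange of limits needs a separate argument. These are gaps you inherit from attempting to prove correctly what the paper disposes of by fiat via the unproved identification $A = \lim \mathscr{D}$.
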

\begin{proof}
Let \mbox{$\mathcal{R} \xrightarrow{\iota} \Std_{Cvx}$} denote the inclusion functor, 
  Given $A \in_{ob} \Std_{Cvx}$ let $A \downarrow \iota$ denote the slice category whose objects are countably affine  maps $A \xrightarrow{m} \Rinf$ and whose  morphisms  are affine measurable maps $\Rinf \xrightarrow{\phi_{m,k}} \Rinf$ such that  $k = \phi_{m,k} \circ m$.
Let $A \downarrow \iota  \xrightarrow{\pi} \mathcal{R}$ denote the projection functor, and define the composite functor
 \mbox{$\mathcal{D}_A = A \downarrow \iota \xrightarrow{\pi} \mathcal{R}  \hookrightarrow \Std_{Cvx}$}. 
Theorem \ref{FF} is equivalent to saying that for every object $A \in \Std_{Cvx}$ that $\lim \mathcal{D}_A = A$ with the natural transformation component at component $A \xrightarrow{m} \Rinf$ being $m$.\cite[Prop. 2, p242]{Mac}

 We can construct a cone over the diagram $\D_A$ with vertex $\G{A}$ as shown in Figure \ref{epsilonD}.

\begin{figure}[H]
\begin{equation} \nonumber
 \begin{tikzpicture}[baseline=(current bounding box.center)]
      
      \node   (GA) at  (3, 0)   {$\G{A}$};
      \node   (GR1)  at  (4.2,1.2)   {$\G(\Rinf)$};
      \node   (GR2) at  (4.2, -1.2)  {$\G(\Rinf)$};
      
       \node  (A)  at  (6.5,0)   {$A$};
       \node  (R1)  at   (7.7, 1.2)   {$\Rinf$};
      \node  (R2)  at  (7.7, -1.2)   {$\Rinf$};
             
       \draw[->,below] (A) to node {$m$} (R1);
       \draw[->,above] (A) to node {$k$} (R2);
      \draw[->,right] (R1) to node {$\phi_{m,k}$} (R2);
      \draw[->,above,dashed] (GA) to node [xshift=7pt]{$\epsilon_A$} (A);
      \draw[->,above] (GA) to node [xshift=-9pt,yshift=-4pt]{$\G(m)$} (GR1);
      \draw[->,below] (GA) to node [xshift=-7pt,yshift=5pt]{$\G(k)$} (GR2);
      \draw[->,above] (GR1) to node {$\E{\bullet}{id_{\Rinf}}$} (R1);
      \draw[->,below] (GR2) to node {$\E{\bullet}{id_{\Rinf}}$} (R2);
 
 \end{tikzpicture}
 \end{equation}
 \caption{The existence of the $\G$-algebras for objects in $\Std_{Cvx}$ based upon the codense subcategory $\mathcal{R}$ of $\Std_{Cvx}$.}
 \label{epsilonD}
\end{figure}
\noindent
That cone with vertex $\G(A)$ is a cone because
for every $P \in \G{A}$, $\phi_{m,k}( \mathbb{E}_{P}(m)) =  \mathbb{E}_P(\phi_{m,k} \circ m) = \mathbb{E}_P(k)$.
Since  $A = \lim \mathcal{D}_A$ it follows there exists a unique affine measurable map $\G{A} \xrightarrow{\epsilon_A} A$ making the whole diagram commute. 

Since $m(\epsilon_A(P)) = \mathbb{E}_{P}(m)$ must hold for every $P \in \G{A}$ it holds in particular for the Dirac measures $P=\delta_a$, from whence it follows that $m(\epsilon_A(\delta_a)) = m(a)$.  Since the set of all affine measurable maps  $A \rightarrow \Rinf$ coseparates the points we conclude that $\epsilon_A(\delta_a) = a$ for every $a \in A$.
 
\end{proof}

The unique  affine measurable functions $\G{A} \xrightarrow{\epsilon_A} A$  can be given the more suggestive notation $\mathbb{E}_{\bullet}(id_A)$ since they are determined by the expectation maps.   For abstract standard convex spaces we are therefore  defining $\mathbb{E}_{P}(id_A)$ as the unique element in $A$ such that the property $m( \mathbb{E}_{P}(id_A)) = \mathbb{E}_{P}(m)$ holds for all countably affine measurable functions $A \xrightarrow{m} \Rinf$.  For $A$ a closed and bounded convex subset of $\mathbb{R}^n$, where  integration is available, the notation $\E{\bullet}{id_A}$ coincides with the usual meaning.\footnote{If the convex space structure of $A$ is fibered over a discrete space then $A$ cannot be embedded into an $\mathbb{R}$-vector space. The space $A=\two$ is the most elementary example of this.}

\begin{lemma} \label{expectationAlgebras} If $A \in_{ob} \Std_{Cvx}$ then the function $\G{A} \xrightarrow{\mathbb{E}_{\bullet}(id_A)} A$ is  a $\G$-algebra.
\end{lemma}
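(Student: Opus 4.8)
The plan is to verify the two Eilenberg--Moore axioms for the structure map $h := \mathbb{E}_{\bullet}(\one_A) = \epsilon_A$ constructed in Lemma \ref{UniqueMaps}. The unit axiom $h \circ \eta_A = \one_A$ is already in hand: Lemma \ref{UniqueMaps} gives $\epsilon_A(\delta_a) = a$ for every $a \in A$, and $\eta_A(a) = \delta_a$ is the unit of the $\G$-monad. All the work therefore goes into the associativity axiom $\epsilon_A \circ \G{\epsilon_A} = \epsilon_A \circ \mu_A$ as maps $\G\G{A} \to A$, where $\mu_A$ denotes the multiplication of the $\G$-monad. (Both composites are measurable, since $\epsilon_A$ is measurable by Lemma \ref{UniqueMaps} and hence so are the pushforward $\G{\epsilon_A}$ and the monad multiplication $\mu_A$.)

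To prove this I would again lean on the fact, established inside Lemma \ref{UniqueMaps}, that $\Rinf$ coseparates the points of every $A \in \Std_{Cvx}$. Hence it is enough to show that the two composites agree after post-composition with an arbitrary affine map $A \xrightarrow{m} \Rinf$; that is, for each such $m$ and each $\Pi \in \G\G{A}$,
\[
m\big(\epsilon_A(\G{\epsilon_A}(\Pi))\big) = m\big(\epsilon_A(\mu_A(\Pi))\big).
\]
I would evaluate each side with the defining property $m(\epsilon_A(P)) = \int_A m\,dP$ of $\epsilon_A$. For the left-hand side, applying this to the measure $\G{\epsilon_A}(\Pi) \in \G{A}$, the pushforward of $\Pi$ along $\epsilon_A$, and then changing variables along that pushforward gives
\[
m\big(\epsilon_A(\G{\epsilon_A}(\Pi))\big) = \int_A m \, d\big(\G{\epsilon_A}(\Pi)\big) = \int_{\G{A}} m(\epsilon_A(P)) \, d\Pi(P) = \int_{\G{A}} \Big( \int_A m \, dP \Big) \, d\Pi(P).
\]
For the right-hand side, applying the defining property to $\mu_A(\Pi)$ and then invoking the standard Giry integration formula $\int_A m \, d(\mu_A(\Pi)) = \int_{\G{A}} ( \int_A m \, dP ) \, d\Pi(P)$ yields the same iterated integral. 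Equality of the two sides follows, and coseparation by $\Rinf$ then upgrades this to $\epsilon_A(\G{\epsilon_A}(\Pi)) = \epsilon_A(\mu_A(\Pi))$ in $A$, proving the associativity axiom and hence that $\epsilon_A$ is a $\G$-algebra.

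The step I expect to be the main obstacle is justifying the Giry integration formula for $\mu_A$ together with the well-definedness of the iterated integral. Concretely, one must know that $P \mapsto \int_A m\,dP$ is a measurable function on $\G{A}$ --- which holds because this function equals $m \circ \epsilon_A$ with $\epsilon_A$ measurable (Lemma \ref{UniqueMaps}), and is in any case built into the initial $\sigma$-algebra on $\G{A}$ from Lemma \ref{BigThm} --- and one must extend $\int_A m \, d(\mu_A(\Pi)) = \int_{\G A}(\int_A m\,dP)\,d\Pi(P)$ from its definitional form on indicators $m = ev_B$ to a general affine $m : A \to \Rinf$ via simple functions and monotone convergence. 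Since the relevant integrals are already presupposed to exist by Lemma \ref{UniqueMaps}, this reduces to routine measure-theoretic bookkeeping, but it is where the standard-space hypotheses (Lemmas \ref{PF} and \ref{BigThm}) and the monotone-convergence machinery actually do their work.
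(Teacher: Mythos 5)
Your proposal is correct and follows essentially the same route as the paper: the unit axiom is read off from $\epsilon_A(\delta_a)=a$ in Lemma \ref{UniqueMaps}, and associativity is reduced via coseparation by $\Rinf$ to the equality of the iterated integrals $\int_{\G{A}}\big(\int_A m\,dP\big)\,d\Pi$ computed along both composites, using the pushforward change of variables on one side and Giry's integration formula for $\mu_A$ on the other. Your closing remark about extending the integration formula from indicators to affine $m:A\to\Rinf$ by monotone convergence matches the paper's own caveat that Giry's theorem is stated for $\mathbb{R}$ and must be extended to $\Rinf$ (including the $\{0,\infty\}$-valued affine characteristic functions), so nothing essential is missing.
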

\begin{proof} 

We need to show the following two properties:
\begin{enumerate}
\item   for all $a \in A$ we have $\mathbb{E}_{\delta_a}(id_A) = a$, 
and 
\item  $\mathbb{E}_{\bullet}(id_A) \circ \mu_A = \mathbb{E}_{\bullet}(id_A) \circ \G( \mathbb{E}_{\bullet}(id_A))$.
\end{enumerate} 
Condition (1) follows from Lemma \ref{UniqueMaps}. 
The second condition will follow from the fact that the  object $\Rinf$ is a  coseparator for $\Std_{Cvx}$. Let  $A \xrightarrow{m} \Rinf$ be  any  affine  measurable function.  
Condition (2) requires  that the $\Std_{Cvx}$-diagram

\begin{equation} \nonumber
 \begin{tikzpicture}[baseline=(current bounding box.center)]      
      
      \node   (G2A) at  (-1, 0)   {$\G^2{A}$};
       \node  (GA)  at  (2,0)   {$\G{A}$};
       \node  (GA2) at  (-1,-2)  {$\G{A}$};
       \node   (A)    at  (2,-2)  {$A$};
       \node   (C)   at   (3, -3)  {$\Rinf$};
       \node   (d)   at    (6,-.5)  {$\mathbb{E}_{\bullet}(m) = m \circ \mathbb{E}_{\bullet}(id_A)$};

    \draw[->,above] (A) to node {$m$} (C);
    \draw[->,above] (G2A) to node {$\mu_A$} (GA);
      \draw[->,left] (G2A) to node {$\small{\G\mathbb{E}_{\bullet}(id_A)}$} (GA2);
      \draw[->,above] (GA2) to node {$\small{\mathbb{E}_{\bullet}(id_A)}$} (A);
      \draw[->,left] (GA) to node [yshift=5pt]{$\small{\mathbb{E}_{\bullet}(id_A)}$} (A);
     \draw[->,right,out=-45,in=90,looseness=.5] (GA) to node {$\mathbb{E}_{\bullet}(m)$} (C);
     \draw[->,below,out=-45,in=180,looseness=.5] (GA2) to node {$\mathbb{E}_{\bullet}(m)$} (C);
 \end{tikzpicture}
 \end{equation}
commute. 
Let $Q \in \G^2{A}$. The east-south path is the quantity
\be \nonumber
\mathbb{E}_{ \mu_A(Q)}(m) = \int_A m \, d\mu_A(Q) = \int_{\G{A}} \big( \int_A m \, dP\big) dQ = \int_{P \in \G{A}} \mathbb{E}_P(m) \, dQ
\ee
where the second equality makes use of Giry's theorem \cite[Theorem 3(d), page 3]{Giry}.   To prove it holds for measurable functions $A \xrightarrow{m} \Rinf$ first consider the case where $m(a) < \infty$ for all $a \in A$.  In this case the proof follows from showing it holds for a characteristic function,  $m=\chi_U$, then by linearity of the integral it holds for simple functions, and finally one uses the monotone convergence theorem.   In the case where $m(a)=\infty$ occurs we use the fact that the singleton set $\{\infty\}$ is measurable and construct the decomposition of the measurable sets of $A$ as usual on $m^{-1}( \mathbb{R}) \subset A$ with the additional measurable set $m^{-1}(\infty)$.  If $P\big(m^{-1}(\infty)\big)>0$ then $\E{P}{m}=\infty$, and if in addition $Q\big( \{P \in \G(A) \, | \, ev_{m^{-1}(\infty)}^{\,}(P) = P(m^{-1}(\infty))>0\} \big)>0$ then $\int_{P \in \G(A)} \E{P}{m} \, dQ=\infty = \int_A m \, d\mu_A(Q)$. On the other hand, if $Q( \{P \in \G(A) \, | \, P(m^{-1}(\infty))>0\} )=0$ then the measurable set $m^{-1}(\infty)$ of $A$ does not contribute anything to the integrals, and the proof for $m(a)< \infty$ suffices.

The south-east path yields
\be \nonumber
 \mathbb{E}_{\G(\mathbb{E}_{\bullet}(id_A))Q}(m) = \mathbb{E}_Q( m \circ \mathbb{E}_{\bullet}(id_A)) = \mathbb{E}_Q( \mathbb{E}_{\bullet}(m))  =\int_{P \in \G{A}} \mathbb{E}_P(m) dQ
\ee
which coincides with the east-south path.  

\end{proof} 

\begin{corollary} \label{mMorphism} Let $A \in_{ob} \Std_{Cvx}$.  Every  affine measurable function $A \xrightarrow{m} \Rinf$ yields a morphism of  $\G$-algebras.
\end{corollary}
\begin{proof}  
From  Lemma \ref{UniqueMaps} the  affine measurable function $\E{\bullet}{id_A}$ is the unique morphism in $\Std_{Cvx}$ such that, for every affine map $A \xrightarrow{m} \Rinf$,   the $\Std_{Cvx}$-diagram
\begin{equation} \nonumber
 \begin{tikzpicture}[baseline=(current bounding box.center)]      
    
      \node   (GA) at  (-1, 0)   {$\G{A}$};
       \node  (GR)  at  (2,0)   {$\G{\Rinf}$};
       \node  (A) at  (-1,-1.5)  {$A$};
       \node   (R)    at  (2,-1.5)  {$\Rinf$};
      
    \draw[->,below] (A) to node {$m$} (R);
    \draw[->,above] (GA) to node {$\G{m}$} (GR);
      \draw[->,left] (GA) to node {$\small{\mathbb{E}_{\bullet}(id_A)}$} (A);
      \draw[->,right] (GR) to node {$\small{\mathbb{E}_{\bullet}(id_{\Rinf})}$} (R);
 \end{tikzpicture}
 \end{equation}
  \noindent
commutes.  By Lemma \ref{expectationAlgebras} both $\E{\bullet}{id_A}$ and $\E{\bullet}{id_{\Rinf}}$ are $\G$-algebras. Hence $m$ is a morphism of $\G$-algebras.
\end{proof}  
 

\begin{lemma} \label{natural} The construction $\E{\bullet}{id_B}$ is natural in the argument $B$.
\end{lemma}
\begin{proof}
Suppose that $A \xrightarrow{k} B$ is a morphism in $\Std_{Cvx}$. By corollary \ref{mMorphism}  the right hand side square of the $\Std_{Cvx}$-diagram
\begin{figure}[H]
\begin{equation} \nonumber
 \begin{tikzpicture}[baseline=(current bounding box.center)]      

      \node  (GA) at  (-4,0)   {$\G{A}$};
      \node  (A)   at   (-4, -1.4)  {$A$};    
      \node   (GB) at  (-1,0)   {$\G{B}$};
       \node  (GR)  at  (2,0)   {$\G{\Rinf}$};
       \node  (B) at  (-1,-1.4)  {$B$};
       \node   (R)    at  (2,-1.4)  {$\Rinf$};
      
      \draw[->,above] (GA) to node {$\G{k}$} (GB);
      \draw[->,below] (A) to node {$k$} (B);
      \draw[->,left] (GA) to node {$\E{\bullet}{id_A}$} (A);
    \draw[->,below] (B) to node {$m$} (R);
    \draw[->,above] (GB) to node {$\G{m}$} (GR);
      \draw[->,left] (GB) to node {$\small{\E{\bullet}{id_B}}$} (B);
      \draw[->,right] (GR) to node {$\small{\mathbb{E}_{\bullet}(id_{\Rinf})}$} (R);

       \end{tikzpicture}
 \end{equation}
 \end{figure}
 \noindent
commutes.  The outer square commutes for the same reason since $A \xrightarrow{m\circ k} \Rinf$.  Combining those two equations we obtain the result that  $(m \circ k) \circ \mathbb{E}_{\bullet}(id_A) = m \circ \mathbb{E}_{\bullet}(id_B) \circ \G{k}$. The set of all countably affine maps $B \xrightarrow{m} \Rinf$ are jointly monic because $\Rinf$ is a coseparator, and hence it follows that the left hand square also commutes.  In other words, the naturality condition holds.  

\end{proof}
 \section{Showing $\Std_{Cvx} = \mathbf{Alg}_{\G}$}
   By the definition of $\Std_{Cvx}$ and Lemma \ref{BigThm}, along with the fact that for any measurable function $f: X \rightarrow Y$ the pushforward map $\G{f}: \G{X} \rightarrow \G{Y}$ is   affine because the convex space structure on those spaces is defined pointwise, it follows that the functor $\G$ can be viewed as a functor $\Std \xrightarrow{\hat{\G}} \Std_{Cvx}$.  There is also the partial forgetful functor $\Std_{Cvx} \xrightarrow{\U_{Cvx}} \Std$ which forgets the convex space structure.  We have $\hat{\G} \dashv \U_{Cvx}$ with the unit of the adjunction $id_{\Std} \xrightarrow{\eta} \U_{Cvx} \circ \hat{\G}$ defined at component $X$ by $\eta_X(x)=\delta_x$, while the counit of the adjunction $\hat{\G} \circ \U_{Cvx} \xrightarrow{\E{\bullet}{id_{\#}}} id_{\Std_{Cvx}}$ defined at component $A$ is $\G(A) \xrightarrow{\E{\bullet}{id_A}} A$. By Lemma \ref{natural}  $\E{\bullet}{id_{\#}}$ is a natural transformation.
The two  trianglular equalities, $\epsilon_{\hat{\G} \cdot} \circ \hat{\G}\eta_{\cdot} = id_{\hat{\G}(\cdot)}$ and $\U_{Cvx}(\epsilon_{\cdot}) \circ \eta_{\U_{Cvx}(\cdot)} = id_{\U_{Cvx}(\cdot)}$ are immediate: for $X \in_{ob} \Std$ we have $\epsilon_{\G(X)} \circ \hat{\G}(\eta_X)  = \mu_X \circ \hat{\G}(\eta_X) = id_{\G(X)}$ and for $A \in_{ob} \Std_{Cvx}$ we have $\U_{Cvx}(\epsilon_A) \circ \eta_A = id_A$.
       Hence the Giry monad factors through $\Std_{Cvx}$, and therefore, since $\mathbf{Alg}_{\G}$ is the largest category through which $\G$ factors, it follows that   $\Std_{Cvx}$ is a subcategory of $\mathbf{Alg}_{\G}$. 

Now we proceed to show that $\mathbf{Alg}_{\G}$ is a subcategory of $\Std_{Cvx}$.  Suppose that $(X,h) \in_{ob} \Std^{\G}$ so that $X \in_{ob} \mathbf{Alg}_{\G}$.  By Lemma \ref{necessary}  $X$ has a convex space structure so that  $X \in_{ob} \Std \cap \Cvx$. 
Hence to show $X \in_{ob} \Std_{Cvx}$ we only need to verify that 
$X$ satisfies the fullness property given by equation \ref{fullProperty}.  

Take any affine measurable function $X \xrightarrow{m} \Rinf$.  We claim that $(X, h) \xrightarrow{m} (\Rinf, \E{\bullet}{id_{\Rinf}})$ is a  $\G$-algebra morphism.
In other words,  we claim the right-hand square of the $\Std$-diagram 
\begin{equation} \nonumber
 \begin{tikzpicture}[baseline=(current bounding box.center)]      

      \node  (G2X) at  (-4,0)   {$\G^2{X}$};
      \node  (GX)   at   (-4, -1.5)  {$\G{X}$};    
      \node   (GX2) at  (-1,0)   {$\G{X}$};
       \node  (GR)  at  (2,0)   {$\G{\Rinf}$};
       \node  (X) at  (-1,-1.5)  {$X$};
       \node   (R)    at  (2,-1.5)  {$\Rinf$};
      
      \draw[->,above] (G2X) to node {$\G{h}$} (GX2);
      \draw[->,right] (GX2) to node {$h$} (X);
      \draw[->,left] (G2X) to node {$\mu_X=\E{\bullet}{id_{\G{X}}}$} (GX);
    \draw[->,below] (X) to node {$m$} (R);
    \draw[->,above] (GX2) to node {$\G{m}$} (GR);
      \draw[->,below] (GX) to node {$h$} (X);
      \draw[->,right] (GR) to node {$\small{\mathbb{E}_{\bullet}(id_{\Rinf})}$} (R);

       \end{tikzpicture}
 \end{equation}
\noindent
commutes.

To prove this note that the space $\G{X}$ is, by Theorem \ref{GX}, an object in $\Std_{Cvx}$. By Example \ref{RFull} we have $\Rinf \in_{ob} \Std_{Cvx}$.   The composite map $\G{X} \xrightarrow{m \circ h} \Rinf$ is an affine measurable map and hence an arrow in $\Std_{Cvx}$.  By Lemma \ref{UniqueMaps} it follows that the outer square commutes.  Thus we have
\be \label{Gh}
\begin{array}{rcl}
\E{\bullet}{id_{\Rinf}} \circ \G{m} \circ \G{h} &=& m \circ (h \circ \mu_X) \\
&=& m \circ (h \circ \G{h})
\end{array},
\ee  
where the second equality makes use of the fact that $h$ is a $\G$-algebra.
Now note that $\G{h}$ is an epimorphism because $h$ is an epimorphism.\footnote{The epimorphisms in both $\Std$ and $\Cvx$ are onto mappings.} (The map $h$ has the right inverse $\eta_X$,   $h \circ \eta_X = id_X$, and the functor $\G$ preserves the identity map so $\G{h}$ has a right inverse, and hence is an epimorphism.)  Consequently, canceling the term $\G{h}$ in equation \ref{Gh} shows the right-hand square commutes.  Stated alternatively, $(X,h) \xrightarrow{m} (\Rinf, \E{\bullet}{id_{\Rinf}})$ is a morphism of $\G$-algebras.

  Since $m$ is a morphism of $\G$-algebras it follows that for all $P \in \G{X}$ that  \mbox{$m( h(P) ) = \mathbb{E}_{P}(m)$}, which in turn implies that 
\be \label{suff}
h(P) \in m^{-1}\big(\E{P}{m}\big),
\ee
or equivalently, since $\E{P}{m}=\tilde{P}(m)$, that $h(P) \in m^{-1}\big(\tilde{P}(m)\big)$.  This equation holds for every affine measurable map  $X \xrightarrow{m} \Rinf$ and hence
\be \nonumber
h(P) \in \bigcap_{m \in \Rinf^X|} m^{-1}\big( \tilde{P}(m) \big).
\ee
Consequently the fullness property 
\be \nonumber
\displaystyle{ \bigcap_{m \in \Rinf^X|}} m^{-1}(\tilde{P}(m)) \ne \emptyset
\ee
is satisfied so that $X \in_{ob} \Std_{Cvx}$.

The fact that every morphism in $\mathbf{Alg}_{\G}$ is a morphism in $\Std_{Cvx}$ follows from Lemma \ref{necessary}.  Hence the category $\mathbf{Alg}_{\G}$ is a subcategory of $\Std_{Cvx}$.

Combining the two results proves $\mathbf{Alg}_{\G} = \Std_{Cvx}$.

\vspace{.1in}
\textbf{Remark} The fact that the algebras of the $\G$-monad are all  expectation maps provides a theoretical justification for  the viewpoint expounded by Peter Whittle in the text \emph{Probability via Expectation}.\cite{Whittle}   When working in the category $\mathbf{Alg}_{\G}$ we can model various maps which arise in applications.  For example, if $X,Y \in_{ob} \mathbf{Alg}_{\G}$ and $X \xrightarrow{f} Y$ is any measurable function then the composite map $\G{X} \xrightarrow{\G{f}} \G{Y} \xrightarrow{\E{\bullet}{id_{Y}}} Y$, which, for each $P \in \G{X}$, simply computes the value $\E{P}{f}$ arises often.  When $Y$ is not a free algebra we can view this, from a category-theoretic perspective, only within the framework of $\mathbf{Alg}_{\G}$.  That computation cannot be modeled using the Kleisi category of the $\G$-monad.
  
\bibliographystyle{plain}
\bibliography{math}

\end{document}